\newcommand{\hide}[1]{}
\numberwithin{equation}{section}
\def\MBP{\text{\rm MBP}}
\def\FBP{\text{\rm FBP}}
\newcommand{\D}{\mathbb D}
\newcommand{\R}{\mathbb R}
\newcommand{\C}{\mathbb C}
\newcommand{\B}{\mathcal B}
\newcommand{\Aut}{{\sf Aut}(\mathbb D)}
\def\Aut{{\sf Aut}}
\def\1#1{\overline{#1}}
\def\2#1{\widetilde{#1}}
\def\3#1{\widehat{#1}}
\def\4#1{\mathbb{#1}}
\def\5#1{\frak{#1}}
\def\6#1{{\mathcal{#1}}}
\newcommand{\mcite}[1]{\csname b@#1\endcsname}
\theoremstyle{theorem}
\def\Aut{{\sf Aut}}
\newtheorem{theorem}{Theorem}[section]
\theoremstyle{definition}
\theoremstyle{remark}
\newtheorem{remark}[theorem]{Remark}
\newtheorem{remarks}[theorem]{Remarks}
\newtheorem{conjecture}[theorem]{Conjecture}
\numberwithin{equation}{section}
\title[Critical sets]{The Nehari--Schwarz lemma and infinitesimal boundary\\[2mm] rigidity of bounded holomorphic functions}
\author[O. Roth]{Oliver Roth}
\address{O. Roth: Department of Mathematics, University of W\"urzburg, Emil Fischer Strasse 40, 97074, W\"urzburg, Germany.} \email{roth@mathematik.uni-wuerzburg.de}
\keywords{}
\dedicatory{Dedicated to the  memory of Gabriela Kohr}
\long\def\REM#1{\relax}
\begin{document}
\selectlanguage{english}
\begin{abstract}
  We survey a number of recent generalizations and sharpenings of Nehari's  extension of Schwarz' lemma  for holomorphic self--maps of the unit disk. In particular, we discuss the case of infinitely many critical points and its relation to the zero sets and invariant subspaces for Bergman spaces, as well as the case of equality at the boundary.
\end{abstract}

\maketitle

\renewcommand{\thefootnote}{\fnsymbol{footnote}}
\setcounter{footnote}{2}
\setcounter{tocdepth}{1}
\tableofcontents

\section{Introduction} \label{sec:intro}

Let $\D=\{z \in \C \, : \, |z|<1\}$ be the open unit disk in the complex plane $\C$, and let
$\B$ denote the set of holomorphic functions from $\D$ into $\overline{\D}$. 
A \textit{finite Blaschke product of degree $n$}  is   a rational function $B \in \B$ of the form
\begin{equation} \label{eq:zeros}
B(z)=\eta \prod\limits_{j=1}^n \frac{z_j-z}{1-\overline{z_j} z} \, , \qquad |\eta|=1 \, ,
\end{equation}
with zeros $z_1, \ldots, z_n \in \D$, not necessarily pairwise distinct.  Hence the multiplicative building blocks of finite Blaschke products are exactly the elements of the group of  conformal automorphisms of $\D$, 
$$ \Aut(\D)=\left\{\eta \frac{z_0-z}{1-\overline{z_0} z} \, :\,  |\eta|=1, \, z_0 \in \D \right\}\, .$$

Blaschke products are omnipresent, and occur for instance  as fundamental normpreserving
factors in many important classes of holomorphic functions on $\D$. We refer
to the recent monograph \cite{GMR2018} and the references therein for an
state--of--the--art account of the properties and abundant applications of
\FBP, the set of all finite Blaschke products.
In this note, we discuss a number of recent generalizations of  Nehari's celebrated extension \cite{Neh1946} of Schwarz' lemma, a topic which is intrinsically related to FBPs, but which has not been treated in \cite{GMR2018}.

\medskip

As point of departure, we note that a  geometric--topological way of thinking about  (non--constant) {\FBP}s is to view
them as proper holomorphic self--maps of $\D$ or -- equivalently -- as finite
branched coverings of $\D$, see \cite[Chapter 3]{GMR2018}. From this view point, it  seems natural
to describe a finite Blaschke product $B$  not in terms of its zeros as in (\ref{eq:zeros}), but in terms of its  \textit{critical points}, that is, the zeros of its first derivative $B'$.
 That this is indeed possible is the content of
the following celebrated result of M.~Heins \cite{Heins1962} (see also
\cite[Chapter 6]{GMR2018},   \cite{Wal1950} and \cite{WP79, Bou1992,Z96,
  Ste2005, KR2008, KR2012, SW2019, Weg2020}).

\begin{theorem}[Heins 1962]\label{thm:H1}
  Let $c_1,\ldots, c_{n-1}$ be points in $\D$. Then there is a Blaschke product $B$ of degree $n$ with critical points $c_1, \ldots, c_{n-1}$ in $\D$ and no others. The Blaschke product $B$ is unique up to post--composition with an element of $\Aut(\D)$.
  \end{theorem}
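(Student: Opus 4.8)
My plan is to recast Heins' theorem as the statement that one natural map is a bijection. A non--constant finite Blaschke product $B$ of degree $n$ extends to a rational self--map of $\widehat{\C}$ of degree $n$, so Riemann--Hurwitz gives it $2n-2$ critical points on the sphere; the symmetry $B(1/\overline z)=1/\overline{B(z)}$ pairs these across $\partial\D$ and none lies on $\partial\D$ (there $B$ is an $n$--fold covering of the circle, hence a local diffeomorphism), so $B$ has exactly $n-1$ critical points in $\D$, counted with multiplicity. This matches the prescribed data, and I encode the theorem in the \emph{critical--point map}
\[ \Phi \colon \mathcal M_n \longrightarrow \mathrm{Sym}^{n-1}(\D), \]
where $\mathcal M_n$ is the set of degree--$n$ Blaschke products modulo post--composition with $\Aut(\D)$ and $\mathrm{Sym}^{n-1}(\D)$ is the set of unordered $(n-1)$--tuples in $\D$. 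Both are manifolds of real dimension $2n-2$: the Blaschke products are parametrised by $\eta\in\partial\D$ and their $n$ zeros, i.e.\ $2n+1$ real parameters, and $\Aut(\D)$ acts freely and properly with $3$ parameters, so $\mathcal M_n$ has dimension $2n+1-3=2n-2$, while the elementary symmetric functions identify $\mathrm{Sym}^{n-1}(\D)$ with an open subset of $\C^{n-1}$. Heins' theorem is exactly the statement that $\Phi$ is bijective: injectivity is the uniqueness and surjectivity is the existence.

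For injectivity I would argue with conformal metrics, the natural Nehari--Ahlfors--Schwarz setting. The pullback $B^*\lambda_\D=\lambda_\D(B)\,|B'|$ of the hyperbolic density $\lambda_\D=1/(1-|z|^2)$ is a conformal metric on $\D$ of constant curvature $-4$ with a conical singularity at each critical point $c_j$ (cone angle $4\pi$ at a simple one), and as $|z|\to1$ it is complete and asymptotic to $\lambda_\D$ itself. If $B_1,B_2$ share the critical set $c_1,\dots,c_{n-1}$, then $B_1^*\lambda_\D$ and $B_2^*\lambda_\D$ have the same curvature, the same singularities, and the same boundary asymptotics, so the quotient $\log(B_1^*\lambda_\D/B_2^*\lambda_\D)$ extends across the $c_j$, tends to $0$ at $\partial\D$, and the Ahlfors--Schwarz maximum principle forces $B_1^*\lambda_\D=B_2^*\lambda_\D$. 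Finally, two holomorphic maps with equal pullback of $\lambda_\D$ differ by a hyperbolic isometry: near a non--critical point the germ $B_2\circ B_1^{-1}$ is a local isometry of $(\D,\lambda_\D)$, hence the restriction of some $T\in\Aut(\D)$, and then $B_2=T\circ B_1$ on all of $\D$ by the identity theorem. This is injectivity of $\Phi$.

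For surjectivity I would combine injectivity with topology rather than construct $B$ by hand. Since $\mathcal M_n$ and $\mathrm{Sym}^{n-1}(\D)$ are manifolds of the same dimension and $\Phi$ is continuous and injective, invariance of domain makes $\Phi$ an open map, so $\Phi(\mathcal M_n)$ is open. If $\Phi$ is also proper, its image is closed; a nonempty subset of the connected manifold $\mathrm{Sym}^{n-1}(\D)$ that is both open and closed is the whole space, giving surjectivity. Properness means that if the prescribed critical points stay in a fixed compact $K\Subset\D$, the Blaschke products cannot degenerate: the family $\B$ is normal, so after normalising by post--composition a sequence $B_k$ subconverges locally uniformly to some $B\in\B$, and one must exclude a drop of degree in the limit. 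The mechanism to exclude it is that a zero of $B_k$ tending to $\partial\D$ forces a critical point to $\partial\D$ as well, so confining the critical points to $K$ keeps the degree equal to $n$; the limit is then a degree--$n$ Blaschke product whose critical set is the limiting configuration, and $\Phi$ is proper.

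The main obstacle is exactly this non--degeneration step: one must quantify how the $n-1$ interior critical points control the $n$ zeros, ruling out both collisions and escape of zeros to $\partial\D$ while the critical points remain in $K$. A clean alternative that bypasses the topology is to solve the existence as a prescribed--singularity Gauss--curvature (Berger--Nirenberg) problem: directly produce a conformal metric $e^{2u}\lambda_\D$ of curvature $-4$ with the conical singularities at $c_1,\dots,c_{n-1}$ and complete, asymptotically hyperbolic boundary behaviour by solving $\Delta u=4e^{2u}$ with prescribed logarithmic singularities, and then recover $B$ as the developing map of this metric (single--valued since the cone angles are multiples of $2\pi$), with completeness forcing $B$ to be a finite Blaschke product of degree $n$. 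The analytic heart then becomes the existence and regularity theory for that semilinear elliptic equation.
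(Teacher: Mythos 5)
Note first that the paper itself gives no proof of Theorem \ref{thm:H1}: it is surveyed and attributed to Heins \cite{Heins1962}, with the only argument supplied in the text being the proof of the Nehari--Schwarz inequality in Section \ref{sec:proof}. So your proposal has to be measured against the proofs in the cited literature. Your architecture is exactly the known continuity-method proof (this is essentially Zakeri's argument \cite{Z96}, and your PDE alternative is the Heins/Kraus--Roth route \cite{Heins1962,KR2008a,KR2012}): the Riemann--Hurwitz count, the dimension count for $\mathcal M_n=\FBP_n/\Aut(\D)$ and $\mathrm{Sym}^{n-1}(\D)$, and the scheme ``injective $+$ invariance of domain $+$ proper $+$ connected target $\Rightarrow$ surjective'' are all correct. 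Your injectivity argument is also sound; it amounts to applying Theorem \ref{thm:NS1} in both directions (each $B_i$ is extremal for the other's critical set, so equality holds everywhere and part (ii) gives $B_1=T\circ B_2$), and the boundary asymptotics you invoke are precisely (\ref{eq:HeinsFBP}), which the paper derives from Frostman's identity (\ref{eq:Fro}) and (\ref{eq:LogDer}).

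The genuine gap is the properness step, which you correctly identify as ``the main obstacle'' but then dispose of with the unproved assertion that a zero of $B_k$ tending to $\partial\D$ forces a critical point to $\partial\D$. That claim is the technical heart of the whole existence proof and cannot be taken for granted: the locally uniform limit of degree-$n$ Blaschke products can a priori be a constant or an arbitrary element of $\B$ of lower degree, and the hyperbolic convex-hull (Gauss--Lucas) location of critical points controls them by the zeros, not the zeros by the critical points, which is the direction you need. As written, the surjectivity half of the proof is therefore incomplete. A clean way to close it with the tools already in the paper is the two-sided companion of (\ref{eq:NS1}): if $C$ denotes the finite Blaschke product whose zeros are the critical points $c_1,\dots,c_{n-1}$ of $B$, then the function $w=\log\bigl((1-|z|^2)\,|B'(z)|/(1-|B(z)|^2)\bigr)-\log|C(z)|$ is superharmonic on $\D$ (its Laplacian is $4\lambda_B^2-4\lambda_{\D}^2\le 0$ by Schwarz--Pick, and the logarithmic singularities cancel) and tends to $0$ at $\partial\D$ by (\ref{eq:HeinsFBP}), whence $B^h(z)\ge|C(z)|$. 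This lower bound is uniform over all $B_k$ whose critical points lie in a fixed compact $K\Subset\D$; it prevents the normalized limit $B$ from being constant, shows via Hurwitz that $B'$ has at least $n-1$ zeros, and forces $B^h(z)\to1$ as $|z|\to1$, so Heins' converse to (\ref{eq:HeinsFBP}) identifies $B$ as a finite Blaschke product, necessarily of degree exactly $n$. With that lemma inserted, your proof is complete; without it, the existence assertion of Theorem \ref{thm:H1} is not established.
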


The Blaschke product $B$ in Theorem \ref{thm:H1} can be  characterized as the essentially unique extremal function in a sharpened form of the Schwarz--Pick  inequality. This fundamental observation \cite{Neh1946} is due to Nehari in 1947.
 In order to state Nehari's result we need to introduce some notation.
 We denote by $\mathcal{C}_f$ the collection of all critical points of a non--constant function $f \in \B$ counting multiplicities.
 By slight abuse of language, we  call $\mathcal{C}_f$ the \textit{critical set of $f$} and write $\mathcal{C}_g \subseteq \mathcal{C}_f$ whenever each critical point of a function $g \in \B$ is also a critical point of $f \in \B$ of at least the same multiplicity. This is in accordance with standard practices, see \cite[\S 4.1]{DS}.

\begin{theorem}[The Nehari--Schwarz lemma] \label{thm:NS1}
  Let $f \in \B$ and let $B \in \FBP$ such that $\mathcal{C}_B \subseteq \mathcal{C}_f$. Then:
  \begin{itemize}
    \item[(i)] (Nehari--Schwarz inequality) 
  \begin{equation} \label{eq:NS1}
    \frac{|f'(z)|}{1-|f(z)|^2} \le \frac{|B'(z)|}{1-|B(z)|^2} \quad \text{ for all } z \in \D;
  \end{equation}
\item[(ii)] (Strong form of the Nehari--Schwarz lemma at an interior point)\\
  Equality holds in (\ref{eq:NS1})  for some point $z \in \D \setminus \mathcal{C}_B$ if and only if 
$f= T \circ B$ for some $T \in \Aut(\D)$.
\end{itemize}
\end{theorem}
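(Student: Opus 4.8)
The plan is to reinterpret both sides of (\ref{eq:NS1}) as conformal densities and then run a comparison-via-maximum-principle argument. Write $\rho_f(z)=|f'(z)|/(1-|f(z)|^2)$ and $\rho_B(z)=|B'(z)|/(1-|B(z)|^2)$; these are the densities of the pullbacks $f^*\lambda_\D$ and $B^*\lambda_\D$ of the hyperbolic metric $\lambda_\D(w)=1/(1-|w|^2)$. Since pullback by a holomorphic map preserves Gaussian curvature away from critical points, both $\log\rho_f$ and $\log\rho_B$ satisfy the Liouville equation $\Delta\log\rho=4\rho^2$ there, while at a critical point of order $m$ the term $\log|f'|$ contributes mass $2\pi m$ to the distributional Laplacian. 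Setting $u=\log(\rho_f/\rho_B)$, I therefore expect the identity
$$\Delta u = 4\rho_B^2\left(e^{2u}-1\right)+2\pi\sum_{c}\left(m_c^f-m_c^B\right)\delta_c,$$
where $m_c^f,m_c^B$ are the multiplicities of $c$ as a critical point of $f$ and of $B$. The hypothesis $\mathcal C_B\subseteq\mathcal C_f$ is exactly what makes the residual measure nonnegative, and this sign is the engine of the whole argument.

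First I would fix the boundary normalization. Schwarz--Pick gives $(1-|z|^2)\rho_f\le1$, while a finite Blaschke product extends holomorphically across $\partial\D$ with nonvanishing derivative there, so a short computation yields $(1-|z|^2)\rho_B(z)\to1$ as $|z|\to1$; hence $\limsup_{z\to\partial\D}u(z)\le0$. To prove (i) I would argue by contradiction on the open set $\Omega=\{u>0\}$. Every critical point either fails to lie in $\Omega$ (where $u\to-\infty$, namely the critical points of $f$ not shared with $B$, and those with $m_c^f>m_c^B$) or carries no residual mass (when $m_c^f=m_c^B$, where $u$ is in fact smooth). Thus on $\Omega$ the display reduces to $\Delta u=4\rho_B^2(e^{2u}-1)\ge0$, so $u$ is subharmonic on $\Omega$. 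Since $u\le0$ on $\partial\Omega\cap\D$ and $\limsup u\le0$ along $\partial\D$, the maximum principle forces $u\le0$ on $\Omega$, whence $\Omega=\varnothing$ and (i) follows.

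For (ii), the direction $f=T\circ B\Rightarrow$ equality is immediate: an automorphism $T$ is a hyperbolic isometry, so $\rho_{T\circ B}\equiv\rho_B$. For the converse, equality at some $z_0\in\D\setminus\mathcal C_B$ means $u(z_0)=0=\sup_\D u$ at a point where $u$ is smooth. Writing $e^{2u}-1=c(z)\,u$ with $c>0$ turns the differential inequality into $(\Delta-V)u\ge0$ with $V=4\rho_B^2 c\ge0$, so the strong maximum principle applies and forces $u\equiv0$ on the connected set $\D\setminus(\mathcal C_f\cup\mathcal C_B)$, i.e.\ $\rho_f\equiv\rho_B$ on $\D$. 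Finally, near any non-critical point of $B$ I would choose a local holomorphic inverse and set $g=f\circ B^{-1}$; the chain rule gives $\rho_f/\rho_B=(1-|w|^2)|g'(w)|/(1-|g(w)|^2)$, so $\rho_f\equiv\rho_B$ says that $g$ is a local hyperbolic isometry, hence the restriction of some $T\in\Aut(\D)$. Then $f=T\circ B$ on an open set, and the identity theorem propagates this to all of $\D$.

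The main obstacle I anticipate is not the maximum principle itself but the bookkeeping at the critical points: one must justify the distributional Laplacian identity through the conical singularities of $\rho_f,\rho_B$, verify that the residual measure is genuinely nonnegative under $\mathcal C_B\subseteq\mathcal C_f$, and confirm that near each critical point $u$ is either $-\infty$ or smooth, so that no negative mass sneaks into $\Omega$. A secondary subtlety worth flagging is that one cannot shortcut (i) by applying Schwarz--Pick to the local map $g=f\circ B^{-1}$, since the infinitesimal Schwarz--Pick inequality genuinely fails for maps defined only on a subdomain of $\D$; the global information is supplied precisely by the boundary normalization together with the sign of the residual measure.
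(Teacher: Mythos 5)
Your argument for part (i) is exactly the paper's proof: the same auxiliary function $u=\log(\rho_f/\rho_B)$, the same Liouville-equation computation making $u$ subharmonic where it is positive, the same boundary normalization $(1-|z|^2)\rho_B\to 1$ for finite Blaschke products combined with Schwarz--Pick to get $\limsup_{|z|\to1}u\le 0$, and the same maximum-principle conclusion (the paper packages your contradiction on $\Omega=\{u>0\}$ as the subharmonicity of $u^+=\max\{u,0\}$, which is only a cosmetic difference). For part (ii) the paper gives no proof and defers to the literature; your sketch via the strong maximum principle for $\Delta-V$ and the local-isometry argument is the standard route taken in those references and is correct.
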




\begin{remark}[The Schwarz--Pick lemma]
In Theorem \ref{thm:NS1}, one can always take $B$
as a finite Blaschke product without  any critical points, that is, as  a
conformal automorphism of $\D$. In this  case, Theorem \ref{thm:NS1} reduces to the standard Schwarz--Pick lemma:
\begin{itemize}
  \item[(i)] (Schwarz--Pick inequality)
\begin{equation} \label{eq:sp}
  \frac{|f'(z)|}{1-|f(z)|^2} \le \frac{1}{1-|z|^2}  \quad \text{ for all } z \in \D;
  \end{equation}
    \item[(ii)]  (Strong form of the Schwarz--Pick  lemma at an interior point)\\
Equality holds in (\ref{eq:sp}) for some point $z \in \D$ if and only if $f \in \Aut(\D)$.
\end{itemize}
\end{remark}

The main purpose of  this note is to survey some recent sharpenings and extensions of the Nehari--Schwarz lemma. In Section \ref{sec:infinite} we discuss a generalization of the Nehari--Schwarz lemma which allows for 
taking into account \textit{infinitely many} critical points instead of only finitey many as in Theorem \ref{thm:NS1}. In Section \ref{sec:bergman} we describe  the connections with the specific Bergman space $A^2_1$, in particular its  zero sets and invariant subspaces. Our presentation is based on recent work of Kraus \cite{K2013}, Dyakonov \cite{Dyakonov2014, Dyakonov2015}, and Ivrii \cite{Ivrii2019, Ivrii2021}. In Section \ref{sec:boundary} we discuss the so--called strong form
of the Nehari--Schwarz lemma, that is, the case of equality in the Nehari--Schwarz inequality 
\textit{at the boundary} which has recently been obtained in \cite[Theorem 2.10]{BKR2020} as a special case of a general boundary rigidity theorem for conformal pseudometrics.
In order to make this paper self--contained we also provide a fairly concise proof of the Nehari--Schwarz inequality (\ref{eq:NS1}) in Section \ref{sec:proof}. The proof we give  is  slightly  different from the standard proofs which can  be found in  \cite[Corollary, p.~1037]{Neh1946} and \cite[Theorem 24.1]{Heins1962}.   
The Nehari--Schwarz lemma has found many further applications, for which we refer to other works such as  \cite{Bergweiler1998,GrahamMinda1999,LiuMinda1992,Minda1983,Ste2005}, for instance.

\section{Proof of the Nehari--Schwarz inequality} \label{sec:proof}

We give a  proof of the Nehari--Schwarz inequality (\ref{eq:NS1}) which is  based on the observation that a finite Blaschke product $B$ has the property that
\begin{equation} \label{eq:HeinsFBP}
\lim \limits_{|z| \to 1} \left( 1-|z|^2 \right) \frac{|B'(z)|}{1-|B(z)|^2}=1 \, .
\end{equation}
In fact, condition (\ref{eq:HeinsFBP}) \textit{characterizes} finite Blaschke products (Heins \cite{Heins1986}, see also \cite{KRR06} and \cite[Chapter 6.5]{GMR2018}). We point out that a simple and direct proof that (\ref{eq:HeinsFBP}) holds for any finite Blaschke product
$$ B(z)=\eta \prod \limits_{j=1}^n \frac{z-z_j}{1-\overline{z_j} z} $$
is possible by making appeal to an identity due to Frostman \cite{Fro}, namely
\begin{equation} \label{eq:Fro}
\frac{1-|B(z)|^2}{1-|z|^2}=\sum \limits_{k=1}^n \left( \prod \limits_{j=1}^{k-1} \left| \frac{z-z_j}{1-\overline{z_j} z} \right|^2 \right) \frac{1-|z_k|^2}{\left|1-\overline{z_k} z\right|^2} \, , \qquad |z| \not=1 \, ,
\end{equation}
and the elementary formula for the logarithmic derivative of $B$ given by 
\begin{equation} \label{eq:LogDer}
 \frac{B'(z)}{B(z)} =\sum \limits_{k=1}^n \frac{1-|z_k|^2}{\left(1-\overline{z_k} z \right) \left(z-z_k \right)} \, .
\end{equation}

Frostman's identity (\ref{eq:Fro}) can be easily established by induction, see \cite[p.~77]{GMR2018}. Clearly, (\ref{eq:Fro}) and (\ref{eq:LogDer}) immediately imply (\ref{eq:HeinsFBP}).

\medskip

Using (\ref{eq:HeinsFBP}) we now give a proof of Theorem \ref{thm:NS1} following very closely the standard proof of Ahlfors' lemma \cite{Ahlfors1938} with only minor modifications.

\begin{proof}[Proof of Theorem \ref{thm:NS1} (i)] Let $f \in \B$ be non--constant, so $\mathcal{C}_f$ is a discrete subset of $\D$.
  We consider the auxiliary function
  $$ u(z):=\log \left( \frac{|f'(z)|}{1-|f(z)|^2} \frac{1-|B(z)|^2}{|B'(z)|} \right) \, \, .$$
  Since $\mathcal{C}_B \subseteq \mathcal{C}_f$ (``including multiplicities''), we see that $u$ is well--defined and real analytic on $\D \setminus \mathcal{C}_f$. For each $\xi \in \mathcal{C}_f$ the limit
  $$ \lim \limits_{z \to \xi} u(z) \in \R \cup \{-\infty\}$$
  exists, so $u$ extends to  an upper semicontinuous function on $\D$ with values in $\R \cup \{-\infty\}$ which we continue to denote by $u$.
Now,  a straightforward computation reveals
  $$ \Delta u=4 \left( \frac{|B'(z)|}{1-|B(z)|^2} \right)^2 \left( e^{2 u}-1 \right) \, , \qquad z \in \D\setminus \mathcal{C}_f  \, .$$
  In particular,
  $$ u^+:=\max \{ u,0\}$$
  is subharmonic in $\D$. On the other hand, in view of the Schwarz--Pick inequality,
  $$ \left(1-|z|^2 \right) \frac{|f'(z)|}{1-|f(z)|^2} \le 1 \, , $$
  we deduce from (\ref{eq:HeinsFBP}) that
  $$ \limsup \limits_{|z| \to 1} u(z) \le 0 \, .$$
  Hence $u^+ \le 0$ in $\D$ by the maximum principle. This implies $u \le  0$ and completes the proof of (\ref{eq:NS1}). 
\end{proof}

\begin{remark}[Strong form of the Nehari--Schwarz lemma at an interior point]
The case of equality for the Nehari--Schwarz inequality (\ref{eq:NS1}) for some \textit{interior} point $z \in \D \setminus \mathcal{C}_B$  can be handled in a similar way as the case of equality at some interior point for Ahlfors' lemma, which has been treated in \cite{Heins1962,Royden1986,Minda1987,Chen,KRR06}. We refer to  e.g.~\cite[Remark 2.2 (d)]{KRR06} for the details.
  \end{remark}

\section{Infinitely many critical points} \label{sec:infinite}

We begin with an extension of the theorems of Heins' (Theorem \ref{thm:H1}) and Nehari--Schwarz (Theorem \ref{thm:NS1})  essentially due to Kraus \cite{K2013}.

\begin{theorem} \label{thm:kraus}
  Let  $\mathcal{C}$ be the critical set of a non--constant function in $\B$.
 Then there is a Blaschke product $B$  with critical set $\mathcal{C}$ such that
  $$  \frac{|f'(z)|}{1-|f(z)|^2} \le \frac{|B'(z)|}{1-|B(z)|^2} $$
  for all $z \in \D$ and any $f \in \B$ such that $\mathcal{C}_f \supseteq \mathcal{C}$. If equality holds at a single point $z \not \in \mathcal{C}$, then $f= T \circ B$ for some $T \in \Aut(\D)$.
  The Blaschke product $B$ is uniquely determined by $\mathcal{C}$  up to post--composition with an element of $\Aut(\D)$.
\end{theorem}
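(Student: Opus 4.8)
The plan is to recast the statement entirely in the language of conformal metrics of curvature $-4$ and to obtain $B$ as the developing map of a \emph{maximal} such metric, deducing all four assertions from this single object. Write $\lambda_\D(z)=1/(1-|z|^2)$ for the density of the hyperbolic metric on $\D$ (curvature $-4$), and for non--constant $h \in \B$ write $h^*\lambda_\D=|h'|/(1-|h|^2)$ for the pulled--back density, which is a conformal metric of curvature $-4$ off $\mathcal{C}_h$ with a conical singularity of order $m$ (cone angle $2\pi(m+1)$) at each critical point of multiplicity $m$. I would let $\mathcal{P}$ be the Perron family of all conformal pseudo--metrics $\lambda$ on $\D$ of curvature $\le -4$ (in the supporting sense $\Delta \log\lambda \ge 4\lambda^2$) whose singular set dominates $\mathcal{C}$, i.e. $\log\lambda(z) \le m_c \log|z-c| + O(1)$ near each $c \in \mathcal{C}$ of multiplicity $m_c$. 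Since $\mathcal{C}=\mathcal{C}_g$ exactly for some non--constant $g \in \B$, the metric $g^*\lambda_\D$ lies in $\mathcal{P}$, so $\mathcal{P}\neq\emptyset$; and by Ahlfors' lemma \cite{Ahlfors1938} every $\lambda\in\mathcal{P}$ satisfies $\lambda \le \lambda_\D$. I would then set $\lambda_B:=\sup_{\lambda\in\mathcal{P}}\lambda$ (upper semicontinuous regularization), a density bounded above by $\lambda_\D$. The point of this reformulation is that it avoids the boundary identity \eqref{eq:HeinsFBP}, which is unavailable here precisely because it characterizes the \emph{finite} Blaschke products.

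The technical core is to show that $\lambda_B$ is again a conformal metric of curvature exactly $-4$ off $\mathcal{C}$ and that $\lambda_B=B^*\lambda_\D$ for a Blaschke product $B$ with $\mathcal{C}_B=\mathcal{C}$. First I would run the standard Perron argument for the Gauss curvature (Liouville) equation $\Delta\log\lambda = 4\lambda^2$, exactly as in the interior--equality analysis of Ahlfors' lemma \cite{Heins1962, KRR06}, to conclude that the maximal element $\lambda_B$ is a genuine smooth solution of curvature $-4$ away from $\mathcal{C}$. Squeezed between $g^*\lambda_\D$ (which has a zero of order exactly $m_c$ at each $c$) and $\lambda_\D$ (which has no zeros), the metric $\lambda_B$ has singular set exactly $\mathcal{C}$ with the correct orders. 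Because the cone angles $2\pi(m_c+1)$ are integer multiples of $2\pi$, the monodromy of the developing (local isometry) map of $\lambda_B$ is trivial around each singular point and lies in $\Aut(\D)$ otherwise; as $\D$ is simply connected this yields a single--valued holomorphic $B:\D\to\D$, unique up to post--composition with $\Aut(\D)$, with $\lambda_B=B^*\lambda_\D$ and $\mathcal{C}_B=\mathcal{C}$.

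The remaining, and genuinely hard, point is that this $B$ is \emph{inner}, i.e. a (possibly infinite) Blaschke product, rather than merely a bounded holomorphic self--map. This is exactly the content of Kraus' theorem \cite{K2013}, and it is where the hypothesis that $\mathcal{C}$ be realizable as a critical set is really used: the maximal curvature--$(-4)$ metric with prescribed singular set $\mathcal{C}$ has an inner developing map precisely when $\mathcal{C}$ is admissible, a condition Kraus characterizes through the zero sets of the Bergman space $A^2_1$ (the subject of Section \ref{sec:bergman}). I would invoke this characterization to conclude that $B$ is a Blaschke product. This boundary regularity of the extremal metric is the substitute for the now--unavailable Heins identity \eqref{eq:HeinsFBP}, and I expect it to be the main obstacle of the proof.

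The three analytic conclusions then follow formally. For the inequality: given any $f\in\B$ with $\mathcal{C}_f\supseteq\mathcal{C}$, the pullback $f^*\lambda_\D$ lies in $\mathcal{P}$, whence $f^*\lambda_\D\le\lambda_B=B^*\lambda_\D$ pointwise, which is exactly the asserted bound. For the equality case, suppose equality holds at some $z_0\in\D\setminus\mathcal{C}$; then $f'(z_0)\neq 0$, so the function $u=\log\big(f^*\lambda_\D/B^*\lambda_\D\big)$ of Section \ref{sec:proof} is finite and subharmonic near $z_0$, is $\le 0$ on $\D$, and attains the value $0$ at the interior point $z_0$. The strong maximum principle then forces $u\equiv 0$, i.e. $f^*\lambda_\D=B^*\lambda_\D$ on $\D$; two self--maps with identical pullback of the hyperbolic metric are developing maps of the same metric and hence differ by post--composition with an automorphism, so $f=T\circ B$ with $T\in\Aut(\D)$. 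This interior rigidity is identical to the finite case recorded in \cite[Remark 2.2(d)]{KRR06}. Finally, uniqueness of $B$ is immediate: any Blaschke product $\widetilde{B}$ with $\mathcal{C}_{\widetilde{B}}=\mathcal{C}$ satisfies both $\widetilde{B}^*\lambda_\D\le B^*\lambda_\D$ and $B^*\lambda_\D\le\widetilde{B}^*\lambda_\D$ by applying the inequality in both directions, forcing equality everywhere and hence $\widetilde{B}=T\circ B$ for some $T\in\Aut(\D)$ by the rigidity just proved.
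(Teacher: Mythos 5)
Two preliminary observations. First, the paper itself contains no proof of Theorem \ref{thm:kraus}: it is a survey statement, with existence and the inequality attributed to \cite{K2013} and the equality case to \cite{KR2013}, so your proposal can only be measured against those references. In broad strokes your outline does follow their strategy: a Perron family of curvature $-4$ pseudometrics with prescribed conical singularities along $\mathcal{C}$, the maximal element $\lambda_B$, its developing map $B$, and the derivation of the inequality, interior rigidity and uniqueness from maximality. The structural problem is that the step you yourself call ``the genuinely hard point'' --- that the developing map of the maximal metric is an inner function, indeed a Blaschke product, with critical set exactly $\mathcal{C}$ --- is not argued but delegated to ``Kraus' characterization,'' which is essentially the theorem under proof. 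Nothing in the Perron construction alone prevents $B$ from being a general self--map of $\D$ (or from acquiring extra critical points); controlling the boundary behaviour of $\lambda_B$ is precisely where \cite{K2013} invests the $A^2_1$ machinery of Section \ref{sec:bergman}, as the substitute for the identity (\ref{eq:HeinsFBP}) used in the finite case. A proof that cites this step away has omitted the theorem's content.

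Two further defects are smaller but real. (a) Your uniqueness argument asserts that \emph{any} Blaschke product $\widetilde B$ with $\mathcal{C}_{\widetilde B}=\mathcal{C}$ satisfies the reverse inequality $B^*\lambda_\D\le\widetilde B^*\lambda_\D$. That is false, and the paper's own remark on properties of maximal Blaschke products, item (c), supplies the counterexample: the Frostman shifts $\pi_a$, $a\neq 0$, are Blaschke products with empty critical set which are not automorphisms, whereas for $\mathcal{C}=\emptyset$ the extremal $B$ is an automorphism. The reverse inequality is available only when $\widetilde B$ is itself assumed extremal; the uniqueness assertion must be read (and proved) as uniqueness of the \emph{extremal} Blaschke product. (b) In the equality case, once $u\le 0$ is known, the identity
$$ \Delta u = 4\left(\frac{|B'(z)|}{1-|B(z)|^2}\right)^2\left(e^{2u}-1\right) \le 0 $$
makes $u$ \emph{super}harmonic, not subharmonic, near $z_0$, so the plain strong maximum principle for subharmonic functions does not apply to $u$ at its maximum. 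One needs the strong maximum principle for $\Delta-c$ with $c\ge 0$ applied to $-u\ge 0$ (equivalently the Harnack--type argument of \cite{KRR06}), exactly as in the equality case of Ahlfors' lemma treated in \cite{Heins1962,Royden1986,Minda1987}. Both points are fixable; the gap in the first paragraph is the essential one.
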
  

See \cite{K2013}, while the case of equality has been settled in \cite{KR2013}.
The Blaschke product $B$ in Theorem \ref{thm:kraus} is called  \textit{maximal
  Blaschke product for $\mathcal{C}$}. The set of all maximal Blaschke
products will be denoted by \MBP.

\begin{remarks}[Properties of maximal Blaschke products]
\phantom{aa} \hspace*{1cm}
    \begin{itemize}
    \item[(a)] Maximal Blaschke products are indestructible: $f \in \MBP$, $T \in \Aut(\D)$ $\Longrightarrow$ $T \circ f \in \MBP$.
    \item[(b)] ($\FBP \subseteq \MBP$)\\
      Maximal Blaschke products for \textit{finite} sets $\mathcal{C}$ are \textit{finite} Blaschke products and vice versa, see \cite[Remark 1.2 (b)]{KR2013}.
      In particular, Theorem \ref{thm:kraus} generalizes Theorem \ref{thm:H1} and Theorem \ref{thm:NS1}.
    \item[(c)] Any maximal Blaschke product is uniquely determined by its critical set up to postcomposition with an element of $\Aut(\D)$. This does not hold for general infinite Blaschke products. Neat examples are the  nontrivial Frostman shifts
      $$ \pi_a(z):=\frac{a-\pi_0(z)}{1-\overline{a} \pi_0(z)} \, , \qquad a \in \D \setminus \{0\} \, ,$$
      of the standard singular inner function
      $$ \pi_0(z):=\exp \left(-\frac{1+z}{1-z} \right) $$
which are Blaschke products  without critical points.
\item[(d)] The accumulation points of the critical set of a maximal Blaschke product $B$  are exactly the accumulation points of its zero set,  and $B$ has an analytic continuation across any other point of the unit circle, see \cite[Theorem 1.4 and Corollary 1.5]{KR2013}.
  \item[(e)] The set of maximal Blaschke products is closed with respect to composition, see \cite[Theorem 1.7]{KR2013}.
\end{itemize}
  \end{remarks}

\section{Zeros sets and invariant subspaces for Bergman spaces} \label{sec:bergman}
  
  \begin{remark}[{\MBP}s and zero sets  in Bergman spaces] \label{rem:1}
    Theorem \ref{thm:kraus} shows in particular that a set
    $\mathcal{C}\subseteq \D$ is the critical set of a function in $\B$ if and
    only if it is the critical set of some maximal Blaschke product.
    It has been shown in \cite{K2013} that this is the case if and only if
    $\mathcal{C}$ is the zero set of a function in the Bergman space (\cite{DS,HKZ})
    $$ A_1^2=\left\{  \varphi :\D \to \C  \text{ holomorphic} \, : \, \iint_{\D} (1-|z|^2)\,
      |\varphi(z)|^2\, dxdy < \infty \right\}\, .$$
    Hence
    \begin{equation} \label{eq:MBP2}
      \text{MBP}/\Aut(\D)=\left\{\text{zero sets of } A^2_1\right\} \, .
    \end{equation}
    
    This can be seen as an analogue of the classical fact that up to a rotation ($=$ multiplication by a number $\eta \in \mathbb{S}^1=\{z \in \C \, : \, |z|=1\}$) the 
    zero sets of functions in the Hardy space $H^2$ are exactly the zero sets of Blaschke products, 
    $$ \text{\rm BP}/\mathbb{S}^1=\left\{\text{zero sets of } H^2\right\} \, .$$
    \end{remark}

    \begin{remark}[Critical sets in $\B$ and singly generated invariant
      subspaces in Bergman spaces] \label{rem:2}

      Remark \ref{rem:1} has a simple operator theoretic interpretation,
      cf.~\cite{DS,HKZ} for background.
A closed subspace of $A^2_1$ is called \textit{zero--based} if it is defined as the set of all
$A^2_1$--functions that vanish at a prescribed set of points in $\D$. Each
such subspace is \textit{invariant}, that is, invariant w.r.t.~to multiplication by $z$. Hence
(\ref{eq:MBP2}) can be trivially rewritten as
 $$  \text{MBP}/\Aut(\D)=\left\{\text{zero--based invariant subspaces of } A^2_1\right\} \, .$$
In particular, if we denote by $[H]$ the \textit{subspace generated by a
  function $H \in A^2_1$},
that is, the minimal closed invariant subspace of $A^2_1$ which contains $H$,
then each zero--based subspace of $A^2_1$ has the form $[B']$, meaning that
it is singly generated by the
derivative $B' \in A^2_1$ of  some   maximal Blaschke product $B$.   
    Combining this observation with the beautiful concept of asymptotic spectral synthesis of Nikol'skii \cite{Nik}
and 
    a deep result of Shimorin \cite{Shimorin2000} about approximation of  singly--generated
    invariant subspaces of Bergman spaces by zero--based subspaces, O.~Ivrii
    \cite{Ivrii2021}  has recently been led to the following striking conjecture
    $$  \text{Inner functions}/\Aut(\D)=\left\{\text{singly generated invariant
      subspaces of } A^2_1\right\} \, ,$$
    or in more explicit terms:
    \begin{conjecture}[Ivrii \cite{Ivrii2021}] \label{conj}
Any singly generated subspace of $A^2_1$ can be generated by the derivative of
an inner function. This inner function is uniquely determined up to postcomposition with a unit disk automorphism.
      \end{conjecture}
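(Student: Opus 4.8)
The plan is to establish the two halves of the statement — existence of an inner generator and its uniqueness up to post--composition with $\Aut(\D)$ — separately, in each case bootstrapping from the already understood case of maximal Blaschke products through equation (\ref{eq:MBP2}). First I would record the reformulation: since $I'$ must lie in $A^2_1$ for $[I']$ to be defined, the existence assertion is that for every $H\in A^2_1$ there is an inner function $I$ with $I'\in A^2_1$ and $[H]=[I']$. By (\ref{eq:MBP2}) this is already known when $[H]$ is \emph{zero--based}: then $[H]=[B']$ for a maximal Blaschke product $B$, unique up to $\Aut(\D)$. The whole task is therefore to pass from zero--based subspaces to arbitrary singly generated ones, and correspondingly from maximal Blaschke products to inner functions.

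For existence, the route suggested by the references to Nikol'skii and Shimorin is an approximation--and--limit argument. Using Shimorin's theorem \cite{Shimorin2000} I would approximate the given cyclic subspace $[H]$ by a chain of zero--based subspaces $M_n$; by (\ref{eq:MBP2}) each $M_n=[B_n']$ for a maximal Blaschke product $B_n$. Nikol'skii's asymptotic spectral synthesis \cite{Nik} is the device guaranteeing that $[H]$ is recovered from the chain $(M_n)$ in a suitable topology on subspaces. After normalizing the $\Aut(\D)$--ambiguity by choosing representatives with, say, $B_n(0)=0$ and $B_n'(0)>0$, a normal--families argument extracts a locally uniform limit $B_n\to I$; since a non--constant locally uniform limit of Blaschke products is inner (Frostman), $I$ is an inner function, the expected generator.

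The hard part will be to upgrade convergence of subspaces to convergence of generators, that is, to show that $M_n\to[H]$ together with $B_n\to I$ forces $I'\in A^2_1$ and $[I']=[H]$. Locally uniform convergence $B_n\to I$ gives $B_n'\to I'$ only locally uniformly, which by itself controls neither the $A^2_1$--norm of $I'$ nor the limit of the generated subspaces; one must produce a uniform $A^2_1$--bound on $B_n'$ (equivalently, uniform control of the critical sets and of the ``energy'' of $B_n$) and rule out escape of mass in the limit. This is exactly the point at which the problem is open. I would attack it by combining Shimorin's norm estimates with the canonical (contractive) divisor description of $M_n$ available for $A^2_1$ \cite{DS}, aiming to show that the canonical divisors $G_{M_n}$ converge in $A^2_1$ to the canonical divisor of $[H]$ and then identifying the latter with an invertible multiple of $I'$.

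For uniqueness, one direction is a routine multiplier computation: if $J=T\circ I$ with $T\in\Aut(\D)$ then $J'=(T'\circ I)\,I'$, and $T'\circ I=-\eta(1-|z_0|^2)/(1-\overline{z_0}I)^2$ is a bounded holomorphic function with bounded reciprocal, hence an invertible multiplier of $A^2_1$, so $[J']=[I']$. For the converse I would use that two generators of the same singly generated subspace differ by a zero--free holomorphic multiplier $\psi$ with zero--free reciprocal; thus $[I']=[J']$ yields $J'=\psi I'$ with $\psi$ zero--free, whence $I$ and $J$ have the same critical set. The remaining step — that such a $\psi$, making both $I'$ and $\psi I'$ derivatives of inner functions, must have the form $c\,(T'\circ I)$, forcing $J=T\circ I$ — is a rigidity statement I would deduce from the boundary behaviour of inner functions together with the uniqueness already known for maximal Blaschke products (Theorem \ref{thm:kraus}), passing to the limit along the same approximating chain used for existence. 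I expect uniqueness to be strictly easier than existence, the genuine obstacle being the norm control in the limiting step above.
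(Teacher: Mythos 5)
The statement you are asked to prove is labelled as a \emph{conjecture} in the paper (due to Ivrii \cite{Ivrii2021}), and the paper supplies no proof of it --- it only records that the original papers \cite{Ivrii2019, Ivrii2021} contain partial results in its support. So no blind proof attempt can be checked against a proof of the paper; the honest verdict is that your proposal is an outline of a strategy for an open problem, not a proof. To your credit, you reconstruct essentially the heuristic that motivates the conjecture (Shimorin's approximation of cyclic invariant subspaces by zero--based ones \cite{Shimorin2000}, Nikol'skii's asymptotic spectral synthesis \cite{Nik}, and the identification (\ref{eq:MBP2}) of zero--based subspaces with maximal Blaschke products), and you correctly flag that the limiting step is where the argument breaks down. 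But that flagged step is not a technical loose end; it is the entire content of the conjecture, so the proposal has a genuine and unfilled gap.

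Concretely, two things fail. First, in the existence half, locally uniform convergence $B_n\to I$ of the normalized maximal Blaschke products gives $B_n'\to I'$ only locally uniformly; nothing in Shimorin's subspace approximation provides a uniform $A^2_1$--bound on $B_n'$, and without it you cannot conclude $I'\in A^2_1$, let alone $[I']=[H]$. The limit $I$ could a priori be an inner function with no critical points at all --- the paper's own example of the Frostman shifts $\pi_a$, which are infinite Blaschke products without critical points, shows how degenerate limits of Blaschke products can be --- and mass can escape to the boundary in exactly the way you describe. Your proposed remedy via canonical (contractive) divisors is a reasonable idea but is not carried out, and it is not known to work. Second, the uniqueness half is also open in general: the rigidity statement you need (that a zero--free multiplier $\psi$ with $J'=\psi I'$ for two inner functions forces $J=T\circ I$) is only known for maximal Blaschke products via Theorem \ref{thm:kraus}, and ``passing to the limit along the approximating chain'' presupposes the existence half you have not established. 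In short: the strategy is the right one to mention in a survey, but as a proof it stops precisely where the open problem begins.
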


This conjecture can be seen as an analogue of the celebrated result of
Beurling that the invariant subspaces of $H^2$ are generated by inner
functions:
$$ \text{Inner functions}/\mathbb{S}^1=\left\{\text{invariant subspaces of } H^2\right\} \, .$$
      
    We refer to the original papers \cite{Ivrii2019, Ivrii2021} for details
    and a number of substantial results in support of Conjecture \ref{conj}.
    \end{remark}

\section{The strong form of the extended Nehari--Schwarz lemma at the boundary} \label{sec:boundary}
    
    We now return to Theorem \ref{thm:kraus} and discuss 
 the case of equality at the boundary.
  For this purpose it is convenient to denote by
 $$ f^h(z):=\left(1-|z|^2 \right) \frac{|f'(z)|}{1-|f(z)|^2}$$
  the \textit{hyperbolic derivative} of a holomorphic function $f : \D \to \D$, see \cite[Definition 5.1]{BeardonMinda2004}. If $ f\in \B$ and $B$ is a maximal Blaschke product with $\mathcal{C}_B \subseteq \mathcal{C}_f$, then
  $$ \frac{f^h}{B^h} : \D\setminus \mathcal{C}_B \to \R $$
  has a continuous extension to $\D$  which will still be denoted by $f^h/B^h$. Theorem \ref{thm:kraus} (see also \cite[Theorem 2.2 (b)]{KR2013}) implies:
  \begin{itemize}
    \item[(i)] (Extended Nehari--Schwarz inequality)
      $$ \frac{f^h(z)}{B^h(z)} \le 1 \quad \text{ for all } z \in \D; $$
      \item[(ii)] (Strong form of the extended  Nehari--Schwarz lemma at an interior point)
  $$ \frac{f^h(z)}{B^h(z)}=1 \quad \text{ for some point } z \in \D \quad \Longleftrightarrow \quad f=T \circ B\quad  \text{ for some  } T \in \Aut(\D)\,. $$
\end{itemize}
  Recently, a boundary version of this interior rigidity result for functions in $\B$
  has been obtained in \cite{BKR2020}:
 \begin{theorem}[The strong form of the generalized Nehari--Schwarz lemma at
   the boundary] \label{thm:BKR}
   \label{thm:branching}  Let  $\mathcal{C}$ be the critical set of a non--constant function in $\B$, 
$B$ a maximal Blaschke product  with critical set $\mathcal{C}_B=\mathcal{C}$, and $f \in \B$ such that $\mathcal{C}_f \supseteq \mathcal{C}$. 
If $$
\frac{f^h(z_n)}{B^h(z_n)}=1+o\left((1-|z_n|)^2\right)$$ 
for some sequence $(z_n)$ in $\D$ such that $|z_n| \to 1$, then $f=T \circ B$
for some $T \in \Aut(\D)$ and $f$ is a maximal Blaschke product.
\end{theorem}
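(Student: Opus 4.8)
The plan is to reduce the statement to a boundary (Hopf--type) rigidity lemma for the curvature equation satisfied by the ratio $f^h/B^h$, and then to close via the interior strong form together with indestructibility. Reusing the auxiliary function from the proof of Theorem \ref{thm:NS1},
\[
u(z)=\log\frac{f^h(z)}{B^h(z)}=\log\!\left(\frac{|f'(z)|}{1-|f(z)|^2}\,\frac{1-|B(z)|^2}{|B'(z)|}\right),
\]
I first record that $u\le 0$ on $\D$ and that, writing $\lambda_B:=|B'|/(1-|B|^2)$,
\[
\Delta u = 4\lambda_B^2\left(e^{2u}-1\right)\qquad\text{on }\D\setminus\mathcal C_f .
\]
The hypothesis $f^h(z_n)/B^h(z_n)=1+o((1-|z_n|)^2)$ then reads $u(z_n)=o((1-|z_n|)^2)$. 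Since $u\le0$, it suffices to prove $u\equiv0$: equality then holds at interior points, the interior strong form (item (ii) preceding the theorem) gives $f=T\circ B$ with $T\in\Aut(\D)$, and since $B\in\MBP$, indestructibility (Remark (a)) yields $f=T\circ B\in\MBP$, which is the full conclusion.

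Everything therefore rests on the boundary rigidity $u\equiv0$. Setting $v:=-u\ge0$ one has $\Delta v = 4\lambda_B^2(1-e^{-2v})\ge0$ away from the extra critical points in $\mathcal C_f\setminus\mathcal C_B$. The decisive structural input is Heins' identity \eqref{eq:HeinsFBP}, which, extended to maximal Blaschke products by the analytic continuation in Remark (d), gives $(1-|z|^2)\lambda_B(z)\to1$ as $z$ approaches a boundary point off the accumulation set of $\mathcal C$, so that $\lambda_B^2\sim(1-|z|^2)^{-2}$ there. Linearizing via $1-e^{-2v}\approx 2v$, near such a point $v$ behaves like a solution of $\Delta v\approx 8(1-|z|^2)^{-2}v$, whose indicial equation $\alpha(\alpha-1)=2$ has roots $\alpha=2$ and $\alpha=-1$. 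Hence bounded nonnegative solutions decay exactly like $(1-|z|)^2$ — this is precisely why the exponent $2$ occurs in the hypothesis, and it fixes the rate against which $o((1-|z_n|)^2)$ is to be compared.

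The heart of the argument is then a Hopf--type boundary lemma at this quadratic scale. Passing to a subsequence I assume $z_n\to\zeta_0\in\partial\D$ and localize near $\zeta_0$, chosen (if necessary) off the accumulation set of $\mathcal C$, where $\lambda_B$ is controlled as above. For the operator $L:=\Delta-8\lambda_B^2$ one has $Lv=4\lambda_B^2[(1-e^{-2v})-2v]\le0$ since $1-e^{-x}\le x$, so $v$ is a supersolution of an operator with nonpositive zeroth--order coefficient. By the strong maximum principle either $v\equiv0$ (and we are done) or $v>0$ throughout the interior. In the latter case I construct an explicit comparison subsolution $w$ of $L$ with $w\asymp(1-|z|^2)^2$ on a disk internally tangent to $\partial\D$ at $\zeta_0$, normalized so that $w\le v$ on the inner boundary of the comparison region; the comparison principle forces $v\ge w\gtrsim(1-|z|)^2$ as $z\to\zeta_0$, contradicting $v(z_n)=o((1-|z_n|)^2)$. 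Thus $v\equiv0$. This step is exactly the specialization of the general boundary rigidity theorem for conformal pseudometrics of \cite{BKR2020} to the curvature--$(-4)$ pair $\lambda_f\le\lambda_B$.

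The main obstacle is this final Hopf--type estimate for the singular semilinear curvature equation: producing a subsolution with the sharp quadratic boundary behaviour dictated by the root $\alpha=2$, and running the comparison principle across the boundary singularity of $\lambda_B$. Two auxiliary difficulties must be absorbed. First, at the extra critical points $\mathcal C_f\setminus\mathcal C_B$ one has $v\to+\infty$ and the Laplacian picks up negative point masses; this is handled by localizing to $\zeta_0$ and only a posteriori deducing $\mathcal C_f=\mathcal C_B$ from $f=T\circ B$. Second, one needs $\zeta_0$ to be a regular boundary point for $B$, which is furnished by the sharp asymptotics $(1-|z|^2)B^h\to1$ together with Remark (d); possible tangential approach of $(z_n)$ is accommodated by carrying out the comparison on a suitably chosen internally tangent (horocyclic) region rather than along a single radius.
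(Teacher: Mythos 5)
Your overall architecture is the right one, and it is essentially the route of the proof the paper points to: the survey itself gives no proof of Theorem \ref{thm:BKR} but defers to \cite{BKR2020}, whose ``Harnack--type inequality for the Gauss curvature equation'' is precisely the quantitative lower bound $v(z)\ge c\,(1-|z|)^2$ that your Hopf/barrier step is designed to produce. The reduction (pass to $u=\log(f^h/B^h)\le 0$, show $u\equiv 0$, invoke the interior strong form to get $f=T\circ B$, then indestructibility of $\MBP$) is correct, as are the curvature identity for $u$, the inequality $Lv=4\lambda_B^2[(1-e^{-2v})-2v]\le 0$ for $v=-u$, and the indicial computation explaining the exponent $2$.

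There is, however, one step that fails as written: you ``localize near $\zeta_0$, chosen (if necessary) off the accumulation set of $\mathcal{C}$, where $\lambda_B$ is controlled as above.'' You cannot choose $\zeta_0$ --- it is forced by the sequence $(z_n)$, every limit point of which may lie in the accumulation set of $\mathcal{C}$ (which can be all of $\partial\D$). Near such points the asymptotics $(1-|z|^2)B^h(z)\to 1$ from (\ref{eq:HeinsFBP}) and Remark (d) are simply unavailable ($B'$ has zeros accumulating there), so the lower bound $\lambda_B^2\sim(1-|z|^2)^{-2}$ on which your subsolution construction leans cannot be assumed. The good news is that this input is not needed: the subsolution inequality $\Delta w\ge 8\lambda_B^2 w$ for $w\ge 0$ only gets \emph{easier} where $\lambda_B$ is small, so the barrier should be built against the worst case $\lambda_B\le\lambda_{\D}:=1/(1-|z|^2)$ given by the Schwarz--Pick lemma applied to $B$. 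For instance $w=a\bigl[(1-|z|^2)^2+2(1-|z|^2)^3\bigr]$ satisfies $\Delta w-8\lambda_{\D}^2w>0$ on a fixed annulus $\{r_0<|z|<1\}$, hence $Lw\ge 0$ there for \emph{every} $B\in\B$; comparing with $v$ on that annulus (using $\min_{|z|=r_0}v>0$ from the strong maximum principle and $w\to 0$ at $\partial\D$) gives $v\gtrsim(1-|z|)^2$ uniformly on $\{|z|>r_0\}$, with no localization and no boundary regularity of $B$ required. The sharp asymptotics of $B^h$ are only relevant to showing the exponent $2$ cannot be improved, not to the proof itself. With that repair your argument goes through.
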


The proof of Theorem \ref{thm:BKR} in \cite{BKR2020} is based on PDE methods,
in particular  a Harnack--type inequality for solutions of the Gauss curvature
equation, see \cite{BKR2020} for details. This approach also yields a  version
of the strong form of the Ahlfors--Schwarz lemma  \cite{Ahlfors1938, Chen, Heins1962,
  Minda1987, Royden1986, Yam1988} \textit{at the boundary}, see \cite[Theorem 2.6]{BKR2020}.
The special case $\mathcal{C}=\emptyset$ of Theorem \ref{thm:BKR} is the following boundary version of the strong form of the classical Schwarz--Pick lemma:

\begin{theorem}[The strong form of the Schwarz--Pick lemma at
   the boundary] \label{thm:BKR2} \label{thm:maindisk}
    Let $f: \D \to \D$ be holomorphic. If 
\vspace*{-0.0cm}   $$ 
 f^h(z_n)=1+o((1-|z_n|)^2)$$
for some sequence $(z_n)$ in $\D$ such that $|z_n| \to 1$, then $f \in \Aut(\D)$.
 \end{theorem}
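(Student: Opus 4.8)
The statement is precisely the special case $\mathcal{C}=\emptyset$ of Theorem~\ref{thm:BKR}: there the maximal Blaschke product $B$ with empty critical set is a conformal automorphism, so $B^h\equiv 1$, the hypothesis becomes $f^h(z_n)=1+o((1-|z_n|)^2)$, and the conclusion $f=T\circ B$ lands in $\Aut(\D)$. It is nonetheless worth sketching a self-contained argument for this base case, since it already displays the whole PDE mechanism behind Theorem~\ref{thm:BKR}.

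Set $h:=-\log f^h=\log(1-|f|^2)-\log|f'|-\log(1-|z|^2)$. By the Schwarz--Pick inequality $f^h\le 1$, so $h\ge 0$, and the computation of Section~\ref{sec:proof} with $B=\mathrm{id}$ gives $\Delta h=\tfrac{4}{(1-|z|^2)^2}(1-e^{-2h})$ wherever $f'\neq 0$, while at the (discrete) critical points of $f$ the summand $-\log|f'|$ contributes a nonpositive point mass to $\Delta h$. Since $1-e^{-2h}\le 2h$ for $h\ge 0$, we obtain, as a distributional inequality on all of $\D$,
\[
 \Delta h\le \frac{8}{(1-|z|^2)^2}\,h=:V\,h,\qquad V:=\frac{8}{(1-|z|^2)^2}\ge 0,
\]
so $h$ is a nonnegative supersolution of the Schr\"odinger--type operator $L:=\Delta-V$. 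In this language the hypothesis reads $h(z_n)=o\bigl((1-|z_n|)^2\bigr)$, and the goal is to prove $h\equiv 0$: indeed $h\equiv 0$ means $f^h\equiv 1$, whence $f\in\Aut(\D)$ by the strong form of the Schwarz--Pick lemma at an interior point.

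The plan is to build a lower barrier for $h$ that decays no faster than $(1-|z|^2)^2$, which contradicts the prescribed faster decay along $(z_n)$ unless $h\equiv 0$. Suppose $f\notin\Aut(\D)$; then, by the interior rigidity just quoted, $f^h<1$ and hence $h>0$ throughout $\D$. The operator $L$ is radial, and near $|z|=1$ the radial ODE $G''+\tfrac1r G'=V\,G$ has, by a Frobenius analysis at the regular singular point $r=1$ (indicial exponents $2$ and $-1$), a one–dimensional space of solutions that stay bounded at the boundary, spanned by a subdominant solution $G_\ast$ with $G_\ast(r)\asymp(1-r^2)^2$ as $r\to 1$. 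Fix $\rho<1$ so close to $1$ that $G_\ast>0$ on $[\rho,1)$ and all but finitely many $z_n$ lie in the annulus $A=\{\rho<|z|<1\}$, and put $m_\rho:=\min_{|z|=\rho}h>0$ and $c:=m_\rho/G_\ast(\rho)>0$. On $A$ the function $w:=h-c\,G_\ast$ is again an $L$–supersolution ($Lw\le 0$, the point masses at critical points only helping), with $w\ge 0$ on $|z|=\rho$ and $\liminf_{|z|\to 1}w\ge 0$ since $c\,G_\ast\to 0$ there. As $V>0$, the operator $L=\Delta-V$ obeys the minimum principle—a supersolution with nonnegative boundary values cannot attain a negative interior minimum (at a critical point $h=+\infty$, so the minimum is not there either)—whence $w\ge 0$, i.e.\ $h(z)\ge c\,G_\ast(z)\ge c'(1-|z|^2)^2$ for $z\in A$ near the boundary. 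Evaluating along $(z_n)$ gives $h(z_n)\ge c'(1-|z_n|^2)^2$, contradicting $h(z_n)=o\bigl((1-|z_n|)^2\bigr)$. Hence $h\equiv 0$ and $f\in\Aut(\D)$.

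The one genuinely technical point is the boundary asymptotics of the barrier, i.e.\ the existence of a positive radial solution $G_\ast\asymp(1-r^2)^2$ of $G''+\tfrac1r G'=V\,G$ near $r=1$; this is exactly the indicial computation identifying $(1-|z|^2)^2$ as the borderline decay (the subdominant indicial exponent being $2$), and it is the concrete avatar of the Harnack--type inequality for the Gauss curvature equation used in \cite{BKR2020} for the general Theorem~\ref{thm:BKR}. I expect this barrier construction and comparison to be the main obstacle, the remainder being the maximum principle for $\Delta-V$ with $V\ge 0$ and the routine bookkeeping for the harmless $+\infty$–singularities of $h$ at the critical points of $f$.
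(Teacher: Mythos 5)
Your proposal is correct. The reduction in your first paragraph --- reading the statement off as the case $\mathcal{C}=\emptyset$ of Theorem~\ref{thm:BKR}, where the maximal Blaschke product is an automorphism and $B^h\equiv 1$ --- is precisely how the paper derives this theorem, and your self-contained barrier argument checks out as a sketch: for $h=-\log f^h$ one does get $\Delta h=\tfrac{4}{(1-|z|^2)^2}(1-e^{-2h})\le \tfrac{8}{(1-|z|^2)^2}\,h$ distributionally (the critical points of $f$ contributing nonpositive point masses), the indicial exponents at $r=1$ are indeed $2$ and $-1$, and the comparison with the subdominant radial solution $G_\ast\asymp(1-r)^2$ yields $h\gtrsim (1-|z|)^2$ near the boundary unless $h\equiv 0$; this is, as you say, the concrete content of the Harnack--type inequality on which the proof of the general Theorem~\ref{thm:BKR} in \cite{BKR2020} rests. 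What you do not mention, and what the paper emphasizes, is that for this special case there is a far more elementary route: Golusin's inequality (\ref{eq:golusin}) gives, with $a:=f^h(0)$ and $p:=\tfrac{2|z|}{1+|z|^2}$,
$$1-f^h(z)\;\ge\;\frac{(1-a)(1-p)}{1+ap}\;=\;\frac{1-a}{1+ap}\cdot\frac{(1-|z|)^2}{1+|z|^2}\;\ge\;\frac{1-a}{4}\,(1-|z|)^2,$$
so the hypothesis along $(z_n)$ forces $f^h(0)=1$, and the interior Schwarz--Pick rigidity concludes --- no PDE, no barrier, no bookkeeping at critical points. Your route buys generality (it is the argument that survives for arbitrary critical sets $\mathcal{C}$, where nothing like (\ref{eq:golusin}) is available); the Golusin route buys a three-line proof with an explicit constant that also makes the sharpness of the exponent $2$ transparent, as in the example $f(z)=z^2$.
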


 The error term is sharp.
For $f(z)=z^2$ we have
$$ f^h(z)=
\frac{2
  |z|}{1+|z|^2}=1-\frac{(1-|z|)^2}{1+|z|^2}=1-\frac{1}{2} \left(1-|z| \right)^2+o\left((1-|z|)^2\right) \qquad (|z| \to 1) \, .$$
Hence one cannot replace ``little $o$'' by ``big $O$'' in Theorem \ref{thm:BKR2}.
 Theorem \ref{thm:BKR2} can also be deduced from the inequality
  \begin{equation} \label{eq:golusin}
 f^h(z) \le \frac{f^h(0)+\displaystyle \frac{2
     |z|}{1+|z|^2}}{1+f^h(0)\displaystyle  \frac{2 |z|}{1+|z|^2}}   \quad
 \text{ for all } |z|<1\,  ,
 \end{equation}
 which has been proved by  Golusin
  (see \cite[Theorem 3]{Golusion1945} or \cite[p.~335]{Go}, and  independently by Yamashita
  \cite{Yamashita1994,Yamashita1997}, Beardon  \cite{Beardon1997}, and by Beardon \&
  Minda \cite{BeardonMinda2004,BeardonMinda2007} as part of their
elegant work on multi--point Schwarz--Pick lemmas. 
 With hindsight, inequality (\ref{eq:golusin}) is exactly the case $w=0$ in Corollary 3.7 of
\cite{BeardonMinda2004}.

\begin{remark}[The boundary Schwarz--Pick lemma and the boundary Schwarz lemma of Burns and Krantz]
From Theorem \ref{thm:maindisk}  one can easily deduce the well--known boundary Schwarz lemma of Burns and Krantz
\cite{BurnsKrantz1994}, which asserts that if  $f$ is a holomorphic selfmap of $\D$ such that
\begin{equation} \label{eq:burnskrantz1}
f(z)=z+o\left(|1-z|^3\right) \quad \text{ as } z \to 1\, ,
\end{equation}
then $f(z) \equiv z$. We refer to \cite[Remark 2.2]{BKR2020} for details.
\end{remark}

\begin{remark}
  Baracco, Zaitsev and Zampieri \cite{BaraccoZaitsevZampieri2006} have improved the boundary Schwarz lemma of Burns and Krantz by proving that if $f : \D \to \D$ is a holomorphic map such that 
  $$f(z_n) =z_n+o\left(|1-z_n|^3\right)$$
  for some sequence $(z_n)$ in $\D$ converging nontangentially to $1$, then $f(z) \equiv z$. Does the result of 
  Baracco, Zaitsev and Zampieri follow from Theorem \ref{thm:BKR2}?
\end{remark}

  We
  refer to \cite{Bol2008,Chelst2001,Dubinin2004, Osserman2000,
    Shoikhet2008,TauVla2001} and in particular to the survey \cite{EJLS2014} by
  Elin et al.~for more on boundary Schwarz--type lemmas in the  one variable setting.


\begin{thebibliography}{99}

\bibitem{Ahlfors1938}
L.~V. {Ahlfors}.
\newblock {\sl An extension of Schwarz's lemma.}
\newblock {{Trans. Am. Math. Soc.}} {\bf 43} (1938), 359--364.


\bibitem{BaraccoZaitsevZampieri2006}
L.~{Baracco}, D.~{Zaitsev}, and G.~{Zampieri}.
\newblock {\sl A Burns-Krantz type theorem for domains with corners.}
\newblock {{Math. Ann.}} {\bf 336} no.~3 (2006), 491--504.

\bibitem{Beardon1997}
A.~F. {Beardon}.
\newblock {\sl The Schwarz-Pick Lemma for derivatives.}
\newblock { {Proc. Am. Math. Soc.}}, {\bf 125} no.~11 (1997), 3255--3256.

\bibitem{BeardonMinda2004}
A.F.~Beardon and D.~Minda, 
{\sl A multi-point Schwarz-Pick lemma}. 
J. Anal. Math. {\bf 92} (2004), 81--104. 

\bibitem{BeardonMinda2007}
A.~F. {Beardon} and D.~{Minda}.
\newblock {\sl The hyperbolic metric and geometric function theory.}
\newblock In {{Proceedings of the international workshop on quasiconformal
  mappings and their applications, December 27, 2005--January 1, 2006}}, pages
  9--56. New Delhi: Narosa Publishing House, 2007.

  \bibitem{Bergweiler1998}
W.~Bergweiler, 
{\sl A new proof of the Ahlfors five islands theorem.} 
J. Anal. Math.~{\bf 76} (1998), 337–-347. 
  
\bibitem{Bol2008} V.~Bolotnikov, 
{\sl A uniqueness result on boundary interpolation.}
{Proc.~Am.~Math.~Soc}. {\bf 136} No. 5 (2008), 1705--1715. 

\bibitem{Bou1992}
    T. Bousch, {\sl Sur quelques probl\`emes de dynamique holomorphe}, \newblock{Ph.D. thesis, Universit\'e
Paris 11, Orsay, 1992.}


\bibitem{BKR2020} F.~Bracci, D.~Kraus, O.~Roth,
A new Schwarz-Pick lemma at the boundary and rigidity of holomorphic maps,
\textit{arXiv:2003.02019}. 
        

\bibitem{BurnsKrantz1994}
D.~M. {Burns} and S.~G. {Krantz}.
\newblock {\sl Rigidity of holomorphic mappings and a new Schwarz lemma at the
  boundary.}
\newblock {{J. Am. Math. Soc.}} {\bf 7} no.~3 (1994), 661--676.



\bibitem{Chelst2001}
D.~Chelst, {\sl A generalized Schwarz lemma at the boundary}. 
{Proc.~Am.~Math.~Soc}. {\bf 129} no.~11 (2001), 3275--3278. 

\bibitem{Chen}
H.~Chen, {\it On the Bloch constant}, in:
 Arakelian, N. (ed.) et al., Approximation, complex analysis, and potential
 theory,  Kluwer Academic Publishers (2001),
 129--161.
 

\bibitem{Dubinin2004} V.N.~Dubinin,
  {\sl The Schwarz inequality on the boundary for functions regular in the disk.}
{J.~Math.~Sci}. {\bf 122} no.~6 (2004), 3623--3629.

\bibitem{DS} P.~Duren and A.~Schuster, {\it Bergman spaces},
  Amer.~Math.~Soc.~2004.

\bibitem{Dyakonov2014} K. M. Dyakonov, {\sl A characterization of Möbius transformations}, C. R. Math.
Acad. Sci. Paris {\bf 352} (2014), no. 2, 593--595.

\bibitem{Dyakonov2015}
K. M. Dyakonov, {\sl Inner functions and inner factors of their derivatives}, Integr.~Equ.~Oper.~Theory {\bf 82} (2015), no. 2, 151--155.
  

\bibitem{EJLS2014} M.~Elin, F.~Jacobzon, M.~Levenshtein, D.~Shoikhet,
  {\it The Schwarz lemma: rigidity and dynamics}, in: A.~Vasil’ev,  Harmonic and complex analysis and its applications, Springer (2014), 135--230. 


  \bibitem{Fro} O.~Frostman, Sur les produits de Blaschke,
{\it  Fysiogr.~S\"allsk.~Lund F\"orh}.~(1942), {\bf 12} no.~15, 169--182.

  
\bibitem{GMR2018}     S.R.~Garcia, J.~Mashreghi, W.T.~Ross, 
{\sl Finite Blaschke products and their connections}. Springer,  2018

\bibitem{Golusion1945}
  G.M.~Golusin, 
{\sl Some estimations of derivatives of bounded functions}. (Russian. English summary)
Rec. Math. [Mat. Sbornik] N.S. {\bf 16(58)} (1945),  295--306.

\bibitem{Go} G. M. Golusin, {\sl Geometric Theory of Functions of One Complex
    Variable}, AMS 1969.
  
   \bibitem{GrahamMinda1999} I.~Graham, D.~Minda, {\sl A Schwarz lemma for
    multivalued functions and distortion theorems for Bloch functions with
    branch points}, {Trans.~Amer.~Math.~Soc}.~{\bf 351} no.~12 (1999), 4741--4752. 

\bibitem{HKZ} H.~Hedenmalm, B.~Korenblum and K.~Zhu, {\it Theory of Bergman
    Spaces}, Springer 2000. 

\bibitem{Heins1962}
M.~{Heins}.
\newblock {\sl On a class of conformal metrics.}
\newblock {{Nagoya Math. J.}}, {\bf 21} (1962), 1--60.


\bibitem{Heins1986} M.~Heins,
Some characterizations of finite Blaschke products of positive degree, {\it
  J.~Anal. Math}.~(1986), {\bf 46}, 162--166.

\bibitem{Ivrii2019}
O.~Ivrii, 
{\sl Prescribing inner parts of derivatives of inner functions},
J. Anal. Math.~{\bf 139} (2019), no. 2, 495–-519.


\bibitem{Ivrii2021} O.~Ivrii, 
{\it Critical structures of inner functions},
J. Funct. Anal. {\bf 281} (2021), no. 8, Paper No.~109138, 14 pp.


\bibitem{K2013} D. Kraus, {\sl Critical sets of bounded analytic functions, zero sets of Bergman
spaces and nonpositive curvature}. Proc. London Math. Soc. (3) {\bf 106} (2013), 931--956.


\bibitem{KR2008a}
D. Kraus and O. Roth, Critical points of inner functions, nonlinear partial differential
equations, and an extension of Liouville’s theorem, J. Lond. Math. Soc. 77 (2008), no. 1,
183--202.

\bibitem{KR2008}
D. Kraus, O. Roth,{\sl The behaviour of solutions of the Gaussian curvature
  equation near an isolated boundary point.} Math. Proc. Cambridge
Philos. Soc. {\bf 145} no.~3 (2008),  643--667.

\bibitem{KR2012}
D. Kraus and O. Roth, {\sl Critical points, the Gauss curvature equation and Blaschke prod-
ucts}, Fields Institute Comm.~{\bf 65} (2012), 133–-157.

\bibitem{KR2013} D. Kraus, O. Roth,  {\sl Maximal Blaschke
    products}. Adv. Math. {\bf 241} (2013), 58--78.

  
\bibitem{KRR06} D.~Kraus, O.~Roth and St.~Ruscheweyh,
A boundary version of Ahlfors' Lemma, locally complete  conformal metrics
and conformally invariant reflection  principles for analytic maps,
{\it J.~Anal.~Math.}~{\bf 101} (2007), 219--256.

\bibitem{LiuMinda1992} X.~Liu, D.~Minda, {\sl Distortion theorems for Bloch
    functions}, Trans.~Amer.~Math.~Soc.~{\bf 333} no.~1 (1992), 325--338.

\bibitem{Minda1983} D.~Minda, {\sl Lower bounds for the hyperbolic metric in
    convex regions},     Rocky Mountain J.~Math.~{\bf 13} no.~1 (1983), 61--69.
  
\bibitem{Minda1987}
D.~{Minda}.
\newblock {\sl The strong form of Ahlfors' lemma.}
\newblock {{Rocky Mt. J. Math.}} {\bf 17} (1987), 457--461.

\bibitem{Neh1946} Z.~Nehari. {\sl A generalization of Schwarz' lemma}. {Duke
    Math.\!\;J.} {\bf 14} (1947), 1035--1049.


\bibitem{Nik}   N.~K.~Nikol'skii, \textit{Treatise on the Shift Operator}, Springer 1986.

  
\bibitem{Osserman2000}
 R.~Osserman, 
{\sl A sharp Schwarz inequality on the boundary}. 
{Proc.~Am.~Math.~Soc}. {\bf 128} no.~12 (2000), 3513--3517. 


\bibitem{Royden1986}
H.~{Royden}.
\newblock {\sl The Ahlfors-Schwarz lemma: The case of equality.}
\newblock { {J. Anal. Math.}} {\bf 46} (1986), 261--270.




\bibitem{SW2019} G.~Semmler, E.~Wegert, {\sl Finite Blaschke products with prescribed critical points, Stieltjes polynomials, and moment problems}. Anal. Math. Phys. {\bf 9} (2019), no. 1, 221–-249.

\bibitem{Shapiro1993}
J.~H. {Shapiro}.
\newblock {\sl {Composition operators and classical function theory.}}
\newblock New York: Springer-Verlag, 1993.


\bibitem{Shimorin2000}
S.M. Shimorin, {\it Approximate spectral synthesis in the Bergman space}, Duke Math. J.~{\bf 101} (1) (2000) 1–-39.

\bibitem{Shoikhet2008}
D.~Shoikhet, 
{\sl Another look at the Burns-Krantz theorem.}
{J.~Anal.~Math}. {\bf 105} (2000), 19--42. 


\bibitem{Ste2005} K.~Stephenson, {\it Introduction to circle packing: the theory of discrete analytic
functions}, Camb.~Univ.~Press 2005.


\bibitem{TauVla2001}
  R.~Tauraso and F.~Vlacci, 
{\sl Rigidity at the boundary for holomorphic self-maps of the unit disk}. 
{ Complex Variables, Theory Appl}. {\bf 45} no.~2 (2001), 151--165. 

\bibitem {Wal1950} J.~Walsh, {\it The location of critical points of analytic
    and harmonic functions}, Am.~Math. Soc.~1950.
  
\bibitem{WP79} Q.~Wang and J.~Peng, {\sl On critical points of finite Blaschke
  products and the equation $\Delta u=e^{2u}$}. \newblock{Kexue Tongbao}~{\bf 24} (1979),
  583--586 (Chinese).

\bibitem{Weg2020} E.~Wegert,  {\sl Seeing the monodromy group of a Blaschke product}. Notices Amer. Math. Soc. {\bf 67} (2020), no. 7, 965--975.
  
  \bibitem{Yam1988} A.~Yamada, Bounded analytic functions and metrics of
  constant curvature on Riemann surfaces, {\it Kodai Math.\!\;J.}~{\bf 11}
  (1988), no.~3, 317--324.
  
\bibitem{Yamashita1994}
S.~{Yamashita}.
\newblock {\sl The Pick version of the Schwarz lemma and comparison of the
  Poincar{\'e} densities.}
\newblock {{Ann. Acad. Sci. Fenn., Ser. A I, Math.}} {\bf 19} no.~2 (1994), 291--322.

\bibitem{Yamashita1997}
S.~{Yamashita}.
\newblock {\sl Goluzin's extension of the Schwarz-Pick inequality.}
\newblock { {J. Inequal. Appl.}} {\bf 1} no.~4 (1997), 345--356.



\bibitem{Z96}
S.~Zakeri, {\sl On critical points of proper holomorphic maps on the unit disk.}
\newblock{ Bull.~London Math.~Soc.}~{\bf 30} (1996), no.~1, 62--66.



\end{thebibliography}
\end{document}